\theoremstyle{plain}
\newtheorem{thm}{Theorem}[section]
\newtheorem{lemma}[thm]{Lemma}
\newtheorem{rmk}[thm]{Remark}
\newtheorem{corollary}[thm]{Corollary}
\newtheorem{example}[thm]{Example}
\newtoks\prt
\newtheorem{proclaim}[thm]{\the\prt}
\theoremstyle{definition}
\def\eqn#1$$#2$${\begin{equation}\label#1#2\end{equation}}
\numberwithin{equation}{section}
\def\epsilon{\varepsilon}
\def\N{\mathbb N}
\def\loc{\operatorname{loc}}
\def\rn{\mathbb R^n}
\def\spt{\operatorname{supp}}
\def\Ln{\mathcal L^n}
\def\H{\mathcal {H}}
\newtoks\by
\newtoks\paper
\newtoks\book
\newtoks\jour
\newtoks\yr
\newtoks\pages
\newtoks\vol
\newtoks\publ
\def\ota{{\hbox\vol{???}}}
\def\cLear{\by=\ota\paper=\ota\book=\ota\jour=\ota\yr=\ota
\pages=\ota\vol=\ota\publ=\ota}
\def\endpaper{\the\by, {\the\paper},
\the\jour, \the\yr, \the\vol , \the\pages.\cLear}
\def\endbook{\the\by, \the\book, \the\publ.\cLear}
\def\endprep{\the\by, \the\paper, \the\jour.\cLear}
\def\endyearprep{\the\by, \textit{\the\paper}, \the\jour, (\the\yr).\cLear}
\def\name#1#2{#2 #1}
\def\sptt{\operatorname{supp}}
\def\loc{loc}
\def\C{\mathcal{C}}
\def\R{\mathbb {R}}
\def\Rn{\mathbb {R}^n}
\def\applim{\operatorname{app}\ \lim}
\title{Composition operator for functions of bounded variation}
\author{Lud\v{e}k Kleprl\'{i}k}
\address{Department of Mathematics and Statistics, University of
Jyv\"askyl\"a, P.O. Box 35 (MaD), FIN-40014, Jyv\"askyl\"a, Finland}
\email{\tt  kleprlud@fit.cvut.cz}
\keywords{Function of Bounded Variation, Mapping of Finite Distortion, Composition operator, Sets of Finite Perimeter}
\subjclass[2000]{46E35, 30C65, 46E30,47B33}
\thanks{The author was supported by the Academy of Finland, project number 263363.}
\begin{document}
\begin{abstract}
We study the optimal conditions on a homeomorphism $f:\Omega\subset \R^n\to \R^n$ to guarantee that the composition $u\circ f$ belongs to the  space  of functions of bounded variation  for every function $u$ of bounded variation. We show that a sufficient and necessary condition is the existence of a constant $K$ such that $|Df|(f^{-1}(A))\leq K\Ln(A)$ for all Borel sets $A$. We also characterize homeomorphisms which maps sets of finite perimeter to sets of finite perimeter.  Towards these results we study when $f^{-1}$ maps sets of measure zero onto sets of measure zero (i.e. $f$ satisfies the Lusin $(N^{-1})$ condition).
\end{abstract}

\maketitle

\section{Introduction}
In this paper we address the following issue. Suppose that $\Omega\subset \Rn$ is an open set, $f : \Omega \to \rn$ is a homeomorphism and a function of bounded variation and  $u$ is a function of $BV(f(\Omega))$. Under
which conditions can we then conclude that $u \circ  f \in BV(\Omega)$ or that $u \circ  f$ is weakly differentiable in some weaker sense? Our main theorem gives a complete answer to this question.
\prt{Theorem}
\begin{proclaim}\label{homos}
Let $\Omega_1, \Omega_2$ be open subsets of $\R^n$  and let $f\in BV_{\loc}(\Omega_1,\Omega_2)$ be a homeomorphism. Suppose that there is a constant $K>0$ such that
\eqn{klic1}
$$|Df|(f^{-1}(A))\leq K \Ln (A)\text{ for all Borel sets } A\subset\Omega_2.$$
Then the operator $T_f(u)=u\circ f$ maps  functions from $BV(\Omega_2)$  into $BV(\Omega_1)$ and
\eqn{spojitost1}
$$|D(u\circ f)|(\Omega_1)\leq K |Du|(\Omega_2).$$
On the other hand, if $f$ is a homeomorphism of $\Omega_1$ onto $\Omega_2$ such that  the operator $T_f$  maps $C_0(\Omega_2)\cap BV(\Omega_2)$  into $BV(\Omega_1)$, 
then $f\in BV_{\loc}(\Omega_1,\Omega_2)$ and there exists a constant $K>0$ such that \eqref{klic1}  holds.
\end{proclaim}
 The class of homeomorphisms  that satisfy \eqref{klic1} forms a natural extension of a special class of mappings of finite distortion. More precisely: in the fourth chapter we show  that the set of homeomorphisms in $W^{1,1}_{\loc}$ with the property  \eqref{klic1}  coincides with the known class of homeomorphisms with finite distortion satisfying that there exists a constant $K>0$ such that
$$|Df(x)|\leq K|J_f(x)| \text{ for a.e. } x \in \Omega.$$
It is known that for this class of Sobolev homeomorphisms we have $T_f(u):=u\circ f\in W^{1,1}$ for all $u\in W^{1,1}$. See \cite{GGR} or \cite{VU} for details. Hence naturally $T_f$ maps function from $W^{1,1}$ to $BV$. 

Let us note that the morphism property of $T_f$ on $BV$ was also known under the assumption that the homeomorphism $f$ belongs to class of mappings with a Lipschitz inverse. This can be found in \cite[Theorem 3.16]{AFP}, or \cite{He2}. We show that the above two classes of homeomorphisms differ (and our class contains both of them).

To prove Theorem \ref{homos} we need to know that $f$ satisfies the Lusin $(N^{-1})$ condition, i.e. preimages of sets of Lebesgue measure zero have measure zero. If the condition fails then there is a set $A\subset \Omega_1$ such that $\Ln(A)>0$ and $\Ln\bigl(f(A)\bigr)=0$. 
Then we can redefine $u$ on the  null set $f(A)$ arbitrarily and the composed function may fail to be  measurable. 
On the other hand, if $f$ satisfies the Lusin $(N^{-1})$ condition then the validity of our statement for one representative of $u$ implies the validity for all representatives, because the compositions only differ
 on a set of measure zero. The Lusin $(N^{-1})$ condition is well-studied in the Sobolev case  (see \cite{KoMa}  and references given there, \cite{Kl}). We study this condition for functions of bounded variation in the third section. 
 The proof of Theorem \ref{homos} is given in the fourth and fifth section. We also prove that it is enough to test $f$ on sets of finite perimeter.
\prt{Theorem}
\begin{proclaim}\label{neces3}
Let $\Omega_1, \Omega_2$ be open subsets of $\R^n$  and let $f$ 
 be a homeomorphism $\Omega_1\to \Omega_2$. Then the following conditions are equivalent:
\begin{enumerate}
\item There is a constant $K>0$ such that $P(f^{-1}(A),\Omega_1)\leq K P(A,\Omega_2)$.
\item The function $f$ has locally bounded variation and there exists a constant $K>0$ such that \eqref{klic1} holds.
\end{enumerate} 
\end{proclaim}

Actually we prove more general statements of the theorems. We allow $f$ to fail to be a homeomorphism. Our mapping will be a general  mapping of bounded variation (its multiplicity can be unbounded) with no jump part and satisfying \eqref{klic1} for some good representative of $f$.

\section{Preliminaries}
We use the usual convention that $C$ denotes a generic positive constant whose exact value may change from line to line.  We denote by $\Ln$ the Lebesgue measure. The symbol $\nabla u(x)$ denotes the classical gradient of $u$ in $x$. By $Du$ we denote the distributional derivative.

Let $\Omega$ be an open subset of $\R^n$. We write $G\subset\subset \Omega$ if the closure $\overline{G}$ is compact and  $\overline{G}\subset \Omega$.  A function $u\in L^1(\Omega)$ whose partial derivatives in the sense
of distributions are signed measures with finite total variation in $\Omega$ is called a function of bounded variation.  The vector space of functions of bounded variation is denoted by $BV(\Omega)$. We write  $u\in BV(\Omega,\R^d)$ if $u_i \in BV(\Omega)$ for all $i\in\{1,\ldots,d\}$.

If $u\in BV(\Omega,\R^d)$, the total variation of the measure $Du$ is defined by
$$|Du|(E)=\sup\Bigl\{\sum_{i=1}^m \int_E u_i \operatorname{div} \phi_i \,d\Ln:\phi \in C_c^1(\Omega, \R^{d\times n}), |\phi(x)|\leq 1 \text{ for } x\in \Omega\Bigr\}<\infty.$$
 We write $u\in BV_{\loc}(\Omega,\rn)$ if for all $x\in \Omega$ there is a ball $B\ni x$ such that $u\in BV(B,\rn)$.

 Propositions 3.6 and 3.13 in \cite{AFP} give us a simple characterization of BV functions
\prt{Theorem}
\begin{proclaim}\label{bvchar}
Suppose that $\Omega\subset \Rn$ is open and $u\in L^1 (\Omega)$. Then $u\in BV(\Omega)$ if and only if there is a sequence $u_k\in W^{1,1}(\Omega)$ such that $u_k \to u$ in $L^1$ and $\sup_k \|D u_k\|_{L^1}<\infty$.

Moreover,
$$|Du|(\Omega)= \inf \Bigl\{\sup_k \|D u_k\|_{L^1(\Omega)}; u_k\in L^1(\Omega), u_k \to u \text{ in }L^1(\Omega) \Bigr\}.$$
\end{proclaim}

\prt{Theorem}
\begin{proclaim}\label{weak}
Suppose that $\Omega\subset \Rn$ is open,  $u_k\in BV(\Omega)$   and there is $u\in L^1(\Omega)$ such that $u_k\rightarrow u$ in $L^1(\Omega)$ and $\sup_k |Du_k|(\Omega)<\infty$. Then $u$ belongs to $BV(\Omega)$ and $u_k$ weakly* converges to $u$ in $BV(\Omega)$.
\end{proclaim}

We say that $E\subset \Omega$ has finite perimeter if the characteristic function $\chi_E$ belongs to $BV(\Omega)$ and we set $$P(E,\Omega)=|D\chi_E|(\Omega).$$

The following lemma gives us a connection between functions of bounded variation and sets of finite perimeter. (Theorem 3.39 in \cite{AFP})
\begin{lemma}\label{perimetr}
Suppose that $\Omega\subset \R^n$ is open and $u\in L^1_{\loc}(\Omega)$. Then
\eqn{445}
$$|Du|(\Omega)=\int_{-\infty}^\infty P(\{x:u(x)>t\},\Omega)\,dt$$
\end{lemma}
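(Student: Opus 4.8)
The plan is to establish the identity \eqref{445} by proving the two inequalities separately, and the main work is in the reverse ("$\geq$") direction.

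For the easy inequality "$\leq$", the idea is to use the layer--cake decomposition $u=u^+-u^-$ with $u^+(x)=\int_0^\infty \chi_{\{u>t\}}(x)\,dt$ and $u^-(x)=\int_{-\infty}^0 \chi_{\{u\leq t\}}(x)\,dt$. Fixing $\phi\in C_c^1(\Omega,\Rn)$ with $|\phi|\leq 1$, and using that $u\in L^1_{\loc}(\Omega)$ while $\operatorname{div}\phi$ is bounded with compact support, Fubini's theorem applies to each of the two half--line integrals; rewriting the contribution of $u^-$ by means of $\int_\Omega \operatorname{div}\phi\,d\Ln=0$ I get
$$\int_\Omega u\operatorname{div}\phi\,d\Ln=\int_{-\infty}^\infty\Bigl(\int_\Omega \chi_{\{u>t\}}\operatorname{div}\phi\,d\Ln\Bigr)\,dt.$$
Estimating the inner integral by $P(\{u>t\},\Omega)$ and taking the supremum over admissible $\phi$ yields $|Du|(\Omega)\leq\int_{-\infty}^\infty P(\{u>t\},\Omega)\,dt$ (and in particular shows finiteness of the left side when the right side is finite). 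The measurability of $t\mapsto P(\{u>t\},\Omega)$ needed to make sense of the integral follows by writing the perimeter as a countable supremum of the functionals $t\mapsto\int_\Omega\chi_{\{u>t\}}\operatorname{div}\phi_j\,d\Ln$, each of which is measurable by dominated convergence.

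For "$\geq$" I would first settle the Sobolev case and then pass to the limit. If $v\in W^{1,1}(\Omega)$, the classical coarea formula for Sobolev functions (equivalently, Sard's theorem combined with the smooth coarea formula after local mollification) gives $\int_\Omega|\nabla v|\,d\Ln=\int_{-\infty}^\infty P(\{v>t\},\Omega)\,dt$. For general $u\in BV(\Omega)$, choose by Theorem \ref{bvchar} functions $u_k\in W^{1,1}(\Omega)$ with $u_k\to u$ in $L^1(\Omega)$ and $\|\nabla u_k\|_{L^1(\Omega)}\to|Du|(\Omega)$. The one--dimensional identity $\int_{-\infty}^\infty\|\chi_{\{u_k>t\}}-\chi_{\{u>t\}}\|_{L^1(\Omega)}\,dt=\|u_k-u\|_{L^1(\Omega)}\to 0$ shows that, after passing to a subsequence (not relabelled), $\chi_{\{u_k>t\}}\to\chi_{\{u>t\}}$ in $L^1(\Omega)$ for a.e. $t$; by lower semicontinuity of the perimeter, which is a consequence of Theorem \ref{weak}, this gives $P(\{u>t\},\Omega)\leq\liminf_k P(\{u_k>t\},\Omega)$ for a.e. $t$. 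Fatou's lemma then yields
$$\int_{-\infty}^\infty P(\{u>t\},\Omega)\,dt\leq\liminf_k\int_{-\infty}^\infty P(\{u_k>t\},\Omega)\,dt=\liminf_k\|\nabla u_k\|_{L^1(\Omega)}=|Du|(\Omega),$$
which, combined with the first inequality, proves \eqref{445}.

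The Fubini and measurability bookkeeping in the first part is routine. The genuine analytic input—and the main obstacle—is the coarea formula for smooth (or $W^{1,1}$) functions invoked in the second part, which I would quote from the standard references (e.g. \cite{AFP}) rather than reprove; the only additional point to check there is that the superlevel sets of the approximants converge in $L^1$ to those of $u$ for almost every level, and this is precisely what the displayed one--dimensional identity provides.
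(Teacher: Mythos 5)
The paper does not prove this lemma at all: it is quoted verbatim as Theorem 3.39 of \cite{AFP}, so there is no in-paper argument to compare against. Your proof is correct and is essentially the standard Fleming--Rishel argument reproduced in that reference: layer-cake plus Fubini for the inequality $\leq$, and strict $W^{1,1}$-approximation, the identity $\int_{-\infty}^{\infty}\|\chi_{\{u_k>t\}}-\chi_{\{u>t\}}\|_{L^1}\,dt=\|u_k-u\|_{L^1}$, lower semicontinuity of perimeter and Fatou for the inequality $\geq$, with the Sobolev coarea formula as the quoted analytic input. The only bookkeeping you leave implicit is that the lemma is stated for $u\in L^1_{\loc}(\Omega)$ while the approximation theorem \ref{bvchar} requires $u\in L^1(\Omega)$; this is handled by exhausting $\Omega$ by open sets $\Omega_j\subset\subset\Omega$ and noting both sides of \eqref{445} are suprema of the corresponding quantities on the $\Omega_j$.
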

We say that the approximate limit of $f\in L^1(\Omega,\R^n)$ exists at $x\in \Omega$ if there is $z\in \rn$ such that 
$$\lim_{r\to 0_+} \frac{1}{\Ln(B(x,r))} \int_{B(x,r)}|f(y)-z|\,dy=0.$$
We write $z=\applim_{y\to x} f(y)$. If $f$ is integrable then the set $S_f$ where the limit does not exists is  $\Ln$-negligible and Borel and 
$\tilde f=\applim f$ is Borel measurable on $\Omega\setminus S_f$. (See Proposition 3.66 in \cite{AFP}.)

Let us note that slightly weaker definitions of approximate limits are available in literature. For instance in \cite{Fe} $z\in \R^m$ is called the approximate limit of measurable function $f:\Omega \to \R^m$ at $x\in \Omega$ if all the sets
$$E_\epsilon=\{y\in \Omega:|u(y)-z|>\epsilon\}$$ 
has density 0 in $x$.
In our paper we follow the notation from \cite{AFP}. See the discussion which follows after Proposition 3.64 in \cite{AFP} to find differences between these definitions.

The main tool is the analogy of the chain rule for the composition of a smooth function and a function of bounded variation, see \cite{AM} or Theorem 3.96 in \cite{AFP}.
\prt{Theorem}
\begin{proclaim}\label{dercom}
Suppose that $\Omega\subset \Rn$ is open, $f\in BV (\Omega, \rn)$ and $u\in C^1(\rn,\R^k)$. Then the composition $u\circ f$ belongs to $BV(\Omega)$  and
$$D(u\circ f)= \nabla u\circ f \cdot D^a f \Ln +\nabla u\circ \tilde f\cdot D^c f+ [u(f^+)-u(f^-)]\otimes \nu_f \H^{n-1}|_J,$$
where 
$$Df=D^a f \Ln +D^c f+\nu_f \H^{n-1}|_J$$
is the usual decomposition of $Df$ in its absolutely continuous part $D^a f$ with respect to  the  Lebesgue measure $\Ln$,  its  Cantor  part $D^c u$ and  its  jump  part,  which  is represented by the restriction of the $(n-1)$ dimensional Hausdorff measure to the jump set $J$.  Moreover, $\nu_f$ denotes the measure theoretical unit normal to $J$, $\tilde f$ is the approximate limit and $f^+$, $f^-$ are the approximate limits from both sides of $J$.
\end{proclaim}
We will work only with functions which have no jump part, i.e. $J=\emptyset$. In that case we have 
$$D(u\circ f)=\nabla u \circ \tilde f \cdot Df.$$

\subsection{Basic properties of measures}
If $u$ is a  $\mu$-measurable function and $E$ is a $\mu$-measurable set then we denote by $\int_E u\, d\mu$ (or $\int_E u(x)\, d\mu(x)$ if we want to emphasize the variable) the integral of $u$ over $E$ with respect to the measure $\mu$. Instead of $d\Ln(x)$ we write shortly $dx$.

Given measure spaces $(X, \mathcal A)$ and $(Y, \mathcal B)$, a measurable mapping $f : X \to Y$ and a measure $\mu : \mathcal A \to [0, \infty]$, the image of  $\mu$ is defined to be the measure $f(\mu) : \mathcal{B} \to [0, \infty]$ given by
$$(f (\mu)) (A) = \mu \left( f^{-1} (A) \right) \text{ for } A \in \mathcal{B}.$$
Sometimes  $f (\mu)$ is called the pushforward of $\mu$.
\prt{Theorem}
\begin{proclaim}\label{chavar}
Let $X,Y, f, \mu$ be as above and $g:Y\to \R^n$ then we have that 
\eqn{obrmir}
$$\int_{Y} g \, d(f(\mu)) = \int_{X} g \circ f \, d\mu,$$ 
whenever one of the integrals is well-defined.
\end{proclaim}

Let $\mu, \nu$ be measures defined on the same $\sigma$-algebra $\mathcal A$ of the space $X$. We say that $\mu$  is
\begin{itemize}
\item  absolute continuous with respect to $\nu$ if 
$$|\nu|(A)=0\Rightarrow |\mu|(A)=0.$$
\item singular with respect to $\nu$ if there are $X_a, X_s\in \mathcal A$ such that $X=X_a\cup X_s$ and
$$|\nu|(X_s)=0=|\mu|(X_a).$$
We set $\sptt \nu=X_s$.
\end{itemize}
For each pair of non-negative measures $\mu$ and $\nu$ on the same $\sigma$-algebra $\mathcal A$ we can find a decomposition $\mu=\mu^a+\mu^s$ such that
$\mu^a$ is absolute continuous with respect to $\nu$ and $\mu^s$, $\nu$ are singular.
\prt{Theorem}
\begin{proclaim}[Radon-Nikodym]\label{radnik}
Let $\mu$ be a non-negative Borel measure on $\rn$ and set 
$$\frac{d\mu}{d\Ln}(x)=\lim_{r\to 0_+} \frac{\mu(B(x,r))}{\Ln(B(x,r))}.$$
Then $\frac{d\mu}{d\Ln}$ exists $\Ln$-a.e., $\frac{d\mu}{d\Ln}(x)$ is $\Ln$-measurable and 
$$\int_A \frac{d\mu}{d\Ln}(x) \, dx \leq \mu(A) \text{ for all Borel sets } A\subset G.$$
Moreover, if $\mu$ is absolute continuous with respect to $\Ln$ then the above inequality holds as equality.
\end{proclaim}

\section{Lusin $(N^{-1})$ condition}
In this section we generalize the result of P. Koskela and J. Mal\'y. In \cite{KoMa} they proved  our Theorem \ref{lusin} in  the special case when $f$ is a Sobolev mapping.

The following  lemma will be useful. See \cite[Lemma 2.1]{KoMa}.
\begin{lemma}\label{pravd}
There is a constant $\tau=\tau(n)$ with the following property: For each atomless probability Borel measure $\mu$ on $\rn$ there is a point $y\in \rn$ and a radius $R>0$ such that
$$\mu(B(y,2R))\geq \tau \text{ and } \mu(\rn \setminus B(y,3R))\geq \tau.$$
\end{lemma}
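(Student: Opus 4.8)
The plan is to argue by contradiction, with a constant $\tau=\tau(n)\in(0,\tfrac12)$ to be fixed at the end. Suppose the conclusion fails: for every $y\in\rn$ and every $R>0$ we have $\mu(B(y,2R))<\tau$ or $\mu(\rn\setminus B(y,3R))<\tau$. For fixed $y$ set $g_y(r)=\mu(B(y,r))$; this is non-decreasing and left-continuous, $g_y(r)\to 0$ as $r\to 0_+$ because $\mu$ is atomless (so $\mu(\{y\})=0$), and $g_y(r)\to 1$ as $r\to\infty$ because $\mu(\rn)=1$. Hence $R_0(y):=\inf\{R>0:\ g_y(2R)\geq\tau\}\in(0,\infty)$.

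First I would extract from the failure of the lemma a thin, almost-full spherical shell around every point. Left-continuity of $g_y$ together with the definition of the infimum gives $\mu(B(y,2R_0(y)))\leq\tau$. On the other hand, for every $R>R_0(y)$ we have $\mu(B(y,2R))\geq\tau$, so the negated hypothesis forces $\mu(\rn\setminus B(y,3R))<\tau$; letting $R\downarrow R_0(y)$ and using continuity of $\mu$ from above yields $\mu(\overline{B(y,3R_0(y))})\geq 1-\tau$. Therefore the closed shell $E(y):=\overline{B(y,3R_0(y))}\setminus B(y,2R_0(y))$ satisfies $\mu(E(y))\geq 1-2\tau$, while $E(y)\subset\overline{B(y,3R_0(y))}$.

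The heart of the proof is an iteration forcing the scales $R_0$ to zero while keeping the reference points in a bounded region. Fix once and for all a number $N=N(n)$ such that any ball of radius $3\rho$ can be covered by $N$ balls of radius $\rho/10$ centred in it, and choose $\tau=1/(N(n)+2)$, so that $(1-2\tau)/N\geq\tau$. Start from an arbitrary $y_1$ and put $\rho_k:=R_0(y_k)$. Covering $\overline{B(y_k,3\rho_k)}\supset E(y_k)$ by $N$ such balls, one of them, $B(y_{k+1},\rho_k/10)$, has $\mu$-mass at least $(1-2\tau)/N\geq\tau$, with $|y_{k+1}-y_k|\leq 3\rho_k$; since then $\mu(B(y_{k+1},2\cdot\rho_k/20))\geq\tau$, this gives $\rho_{k+1}\leq\rho_k/20$. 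Iterating, $\rho_k\leq\rho_1\,20^{-(k-1)}$, so $\sum_k|y_{k+1}-y_k|<\infty$ and $y_k\to y_\infty$ for some $y_\infty\in\rn$ with $|y_k-y_\infty|\leq C\rho_k$.

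Finally, every $x\in E(y_k)$ satisfies $|x-y_\infty|\leq|x-y_k|+|y_k-y_\infty|\leq C'\rho_k=:\delta_k\to 0$, hence $E(y_k)\subset\overline{B(y_\infty,\delta_k)}$ and $\mu(\overline{B(y_\infty,\delta_k)})\geq 1-2\tau$ for every $k$. Since $\bigcap_k\overline{B(y_\infty,\delta_k)}=\{y_\infty\}$, continuity of $\mu$ from above gives $\mu(\{y_\infty\})\geq 1-2\tau>0$, contradicting atomlessness; this proves the lemma with $\tau(n)=1/(N(n)+2)$. The only real subtleties are the bookkeeping of scales — the contraction factor must beat the covering number $N(n)$, which is exactly why $\tau$ must be chosen that small — and the fact that $r\mapsto\mu(B(y,r))$ need not be right-continuous, which is why one works with closed balls in the shell $E(y)$ and in the limiting step.
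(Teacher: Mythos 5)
Your argument is correct and complete. Note that the paper itself offers no proof of this lemma: it is quoted verbatim from Koskela--Mal\'y \cite{KoMa}, Lemma 2.1, so there is nothing internal to compare against; your write-up supplies a self-contained proof in the same general spirit as the original (a critical-radius/stopping-time argument combined with a covering bound, terminating in a contradiction with atomlessness). The individual steps all check out: left-continuity of $r\mapsto\mu(B(y,r))$ for open balls gives $\mu(B(y,2R_0(y)))\le\tau$, continuity from above along $R\downarrow R_0(y)$ gives $\mu(\overline{B(y,3R_0(y))})\ge 1-\tau$, so the closed shell carries mass $1-2\tau$; the choice $\tau=1/(N(n)+2)$ makes the pigeonhole step $\mu(B(y_{k+1},\rho_k/10))\ge(1-2\tau)/N\ge\tau$ force $\rho_{k+1}\le\rho_k/20$, the geometric decay makes $(y_k)$ Cauchy with $|y_k-y_\infty|\le C\rho_k$, and the nested closed balls $\overline{B(y_\infty,\delta_k)}$ of mass $\ge 1-2\tau>0$ shrink to $\{y_\infty\}$, producing the atom. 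You are also right to flag the two genuine subtleties (the contraction factor must beat the covering number, and one must pass to closed balls where right-continuity of $r\mapsto\mu(B(y,r))$ is unavailable), and you handle both correctly. The only cosmetic remark is that the displayed value of the constant, $\tau(n)=1/(N(n)+2)$ with $N(n)$ a $30$-fold covering number of the unit ball, is more explicit than what the application in Section 3 requires, which is harmless.
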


\prt{Theorem}
\begin{proclaim}\label{lusin}
Let $\Omega\subset \R^n$ be connected open set, $f\in BV(\Omega, \rn)$ have no jump part. 
Suppose that
\eqn{key789}
$$|Df|(\tilde f^{-1} (A)) \leq \int_A K(y) \,dy \text{ for all Borel set } A\subset \rn,$$ where $K(y)\in L^{p'}$ for some $p\in [1,n]$, $p'=\frac{p}{p-1}$ . If $f$ is not constant then $f$ satisfies Lusin $(N^{-1})$ condition, i.e. for any set $E\subset\rn$ we have
$$\Ln(E) = 0 \Rightarrow \Ln(f^{-1}(E)) = 0 .$$
\end{proclaim}
\begin{proof}
\textsl{ Step 1.}
Without loss of generality we may assume that $K$ is a Borel function and if $p=1$ we have  $K(y)= \operatorname{ess sup}_{z\in\R^n} K(z)$ for all $y\in\R^n$.

  We first prove an auxiliary estimate. With the help of  Theorem \ref{chavar} and the fact that for the image of measure $Df$ we have
	$$\tilde f(|Df|)(A)=|Df|(\tilde f^{-1} (A))\leq \int_A K(y) \,dy$$
	we obtain for each nonnegative Borel measurable  function $g$ and Borel set $A\subset \rn$ that
\eqn{formula}
$$\int_{f^{-1}(A)} g(\tilde{f}(x)) \,d|Df|(x)= \int_{A} g(y) \, d\tilde{f}(|Df|)(y)\leq \int_{A} g(y) K(y)\, dy.$$
 Note that the set $$N=\{x\in \Omega:K(\tilde f(x))=0\}=\tilde f^{-1}(\{y\in \rn:K(y)=0\})$$ has $|Df|$-measure zero.
Let $E\subset \Omega$ be a measurable set. Consider a smooth function $u$ with a compact support in $\rn$. 

With the help of Theorem \ref{dercom}, H\"older inequality and \eqref{formula} we can estimate
\eqn{klic58}
$$\begin{aligned}
 |D(u\circ f)&|(E)\leq \int_{E} |\nabla u(\tilde f(x))| \,d|Df|(x)\\
&= \int_{E\setminus N} |\nabla u(\tilde f(x))| K(\tilde f(x))^{-\frac{1}{n}} K(\tilde f(x))^{\frac{1}{n}} \,d|Df|(x) \\
&\leq \left(\int_{E\setminus N} |\nabla u(\tilde f(x))|^n K(\tilde f(x))^{-1} \,d|Df|(x)\right)^{1/n} \left(\int_{E} K(\tilde f(x))^{\frac{1}{n-1}} \,d|Df|(x)\right)^{1/n'}\\
&\leq \left(\int_{\rn} |\nabla u(y)|^n \,dy\right)^{1/n}\left(\int_E K(\tilde f(x))^\frac{1}{n-1} \,d|Df|(x)\right)^{1/n'}.
\end{aligned}$$

\textsl{ Step 2.} 
 We claim that
\eqn{odh23}
$$y_0 \in \rn  \Rightarrow  \Ln(f^{ -1}(\{y_0\})) = 0.$$

For this, consider an arbitrary ball $B\subset\subset \Omega$ and $y_0 \in \rn$. Suppose that $f$
differs from $y_0$ on a set of positive measure in $B$. Then there is $R > 0$ such
that
\eqn{odh24}
$$\kappa :=\Ln(B \setminus  f^{ -1}(B(y_0, R))) > 0.$$

Since singletons have zero $n$-capacity, given $\epsilon > 0$ there is a smooth function
$u$ on $\rn$ such that
$$\spt u \subset B(y_0, R),\ u(y_0) = 1 \text{ and } \int_{\rn} |\nabla u|^n \,d\Ln < \epsilon^n.$$
Then
$$\min \{\Ln(B \cap f^{ -1}(\{y_0\})), \kappa\}\leq  C r |D(u \circ f )|(B) .$$ 

For this we used the well-known trick
\eqn{od67}
$$1/2  \min\{\Ln(B \cap \{v \leq 0\}), \Ln(B \cap \{v \geq 1\})\} \leq  \inf_{c\in\R}\int_B|v - c| \,d\Ln 
\leq C r |Dv|(B),$$
based on the Poincare inequality, where the hypothesis is that $v \in BV.$
Note that from \eqref{formula} it follows for $p>1$ that 
$$\int_{\Omega} K(\tilde f(x))^\frac{1}{p-1} \,d|Df|(x)\leq \int_{\rn} K(y)^{p'}\,dy<\infty.$$
We know that $|Df|$ is a finite measure because $f\in BV(\Omega)$.  Hence $K(\tilde f(x))^\frac{1}{n-1}\in L^1(\Omega,|Df|)$. Trivially this relation holds even for the case when $p=1$.

Together with \eqref{klic58} we obtain
$$\begin{aligned}
\min\{|B \cap f^{ -1}(\{y_0\})|, \kappa\} &\leq  \left(\int_{\rn} |\nabla u|^n \,d\Ln\right)^{1/n} \left(\int_{B} K(\tilde f(x))^{\frac{1}{n-1}} \,d|Df|(x)\right)^{1/n'}\\
&\leq C \epsilon \left(\int_{B} K(\tilde f (x))^{\frac{1}{n-1}} \,d|Df|(x)\right)^{1/n'}.
\end{aligned}$$
Letting $\epsilon \to 0$ and using \eqref{odh24} we obtain that $\Ln(B \cap f^{ -1}(\{y_0\})) = 0$ whenever
$f$ differs from $y_0$ on a set of positive measure in $B$. Hence \ref{odh23} follows by taking the connectedness of $\Omega$ and the assumption that $f$ is not constant into account.

\textsl{ Step 3.} 
 Let us prove that there is some $c>0$ such that 
\eqn{radnyk}
$$\lim_{r\to 0_+}\frac{\int_{B(x_0,r)}K(\tilde f(x))^{\frac{1}{n-1}}\,d|Df|(x)}{\Ln(B(x_0,r))}>c$$ for a.e. $x_0$ in $\Omega$. 
Fix a ball $B(x_0,r)\subset\subset \Omega$. Consider the Borel measure $\mu$ defined by
$$\mu(A)=\frac{\Ln(B(x_0,r)\cap f^{-1}(A))}{\Ln(B(x_0,r))},\ A\subset \rn.$$
From Step 2 we know that $\mu$ does not have atoms. By Lemma \eqref{pravd} we find a point $y\in \rn$ and a radius $R>0$ such that
\eqn{procento2}
$$\mu(B(y,2R))\geq \tau \text{ and } \mu(\rn \setminus B(y,3R))\geq \tau.$$
Find a smooth function $u$ such that 
$$ u(x)=1 \text{ on } B(y,2R),\ u=0
\text{ outside } B(y,3R) \text{ and } \int_{\rn}|\nabla u|^n \,d\Ln\leq C(n).$$

The function $v=u\circ f$ belongs to $BV(\Omega)$ and we have
\eqn{prvak}
$$\begin{aligned}
 \frac{\Ln(B(x_0,r)\cap \{v=1\})}{\Ln(B(x_0,r))}&\geq \frac{\Ln(B(x_0,r)\cap f^{-1}(B(y,2R)))}{\Ln(B(x_0,r))}=\mu(B(y,2R))\geq \tau,\\
\frac{\Ln(B(x_0,r)\cap \{v=0\})}{\Ln(B(x_0,r))}&\geq \frac{\Ln(B(x_0,r)\setminus f^{-1}(B(y,3R)))}{\Ln(B(x_0,r))}=\mu(\rn \setminus B(y,3R))\geq \tau.
\end{aligned}$$

 By \eqref{prvak}
 , \eqref{od67} and \eqref{klic58}
  we have
\eqn{16}
$$\begin{aligned}
1&\leq C r^{1-n}  |D(u\circ f)|(B(x_0,r)) \leq C r^{1-n}  \left(\int_{B(x_0,r)} K(\tilde f(x))^{\frac{1}{n-1}} \,d|Df|(x)\right)^{1/n'}\\
&=C \left(\frac{1}{|B(x_0,r)|}\int_{B(x_0,r)} K(\tilde f(x))^{\frac{1}{n-1}} \,d|Df|(x)\right)^{1/n'}.
\end{aligned}$$

 \textsl{ Step 4.}
 From Step 3 we know that the Radon-Nikodym derivative of the measure $\nu=K(\tilde f(x))^{\frac{1}{n-1}} |Df|$ with respect to $\Ln$ is greater than some $c>0$. Let $E$ be an arbitrary set of measure zero. Take $\tilde{E}\supset E$ a Borel set of measure zero. 
It follows from Radon-Nikodym theorem \ref{radnik} and \eqref{formula} that
$$\begin{aligned}
c\Ln(f^{-1}(\tilde{E}))&= \int_{f^{-1}(\tilde{E})}c \,d\Ln\leq  \int_{f^{-1}(\tilde{E})} \frac{d\nu}{d\Ln} \,d\Ln
\\ &\leq \int_{f^{-1}(\tilde{E})} K(\tilde f(x))^{\frac{1}{n-1}} \,d|Df|(x)\leq \int_{\tilde{E}} K(y)^{\frac{n}{n-1}} \,dy=0.
\end{aligned}$$
Hence $f^{-1}(E)$ is a subset of a set of measure zero and it has measure zero.
\end{proof}

\prt{Theorem}
\begin{proclaim}\label{l1}
Let $f$ satisfy the assumption of Theorem \ref{lusin} for $p=1$ (i.e. the function $K$ is in $L^\infty(\R^n)$). Then the operator $T_f$ defined by $T_f(u)(x)=u(f(x))$ maps  $L^1(\rn)$ to $L^1(\Omega)$ boundedly.
\end{proclaim}
\begin{proof}
First note that from Theorem we know tha DODELAT

 Without loss of generality we may assume that $K(y)\leq K$ everywhere and then we obtain by \eqref{16} that
$$\begin{aligned}
1&\leq C K^{n} \left(\frac{1}{\Ln(B(x_0,r))}\int_{B(x_0,r)} \,d|Df|\right)^{1/n'}.
\end{aligned}$$
Thus we proved that the Radon-Nikodym derivative of measure $\nu=|Df|$ with respect to $\Ln$ is greater than some $c>0$. It follows from Radon-Nikodym theorem \ref{radnik} and \eqref{formula} that
\eqn{l11}
$$\begin{aligned}
c \int_{\Omega} |u\circ f| \,d\Ln&\leq  \int_{\Omega} |u\circ \tilde f| \frac{d|Df|}{d\Ln} \,d\Ln
\\ &\leq \int_{\Omega} |u\circ \tilde f| \,d|Df|\leq \int_{\Rn} |u(y)| K(y) \,dy\leq K \int_{\Rn} |u(y)|  \,dy.
\end{aligned}$$
\end{proof}

\begin{rmk}
Analogously to Theorem \ref{l1} it is possible to show that for such mapping $f$ its operator $T_f$ maps any rearrangement invariant space $X(\rn)$ to $X(\Omega)$ and $\|u\circ f\|_{X(\Omega)}\leq c\|u\|_{X(\rn)}$. To get the sufficient estimate on the level set use \eqref{l11} on $u=\chi_{\{|u|\geq \alpha\}}$.
\end{rmk}

The conditions on $f$ in Theorem \ref{lusin} are sharp. 
For all $p>n$ there is a Sobolev self-homeomorphism of $(0,1)^n$ such that $K(y) \in L^{p'}$ but Lusin $(N^{-1})$ condition fails. Indeed, in \cite{Kl} we constructed a homeomorphism of finite distortion such that $|Df(x)|^{p}\leq L(x) J_f(x)$ a.e. with $L(x)\in L^\infty$, but Lusin $(N^{-1})$ condition fails. Let us show that \eqref{key789} from Theorem \ref{lusin} is satisfied for $K(y)=L(f^{-1}(y))^{\frac 1p} J_f(f^{-1}(y))^{\frac{1-p}p}$.  

Denote by $N$  a set of measure zero such that $f|_{(0,1)^n\setminus N}$ satisfies the Lusin $(N)$ condition. Set $Z=\{x:J_f(x)=0\text{ or does not exist}\}$. Then with the help  of Area formula (see \cite[Theorem 2]{H1}) we easily obtain
$$\begin{aligned}
|Df|(E)&\leq \int_{E} L(x)^{\frac 1p} J_f^{\frac 1p}(x) \,dx = \int_{E\setminus (Z\cup N)} L(x)^{\frac 1p} J_f^{\frac 1p}(x)\,dx\\
&=\int_{f(E\setminus (Z\cup N))} L(f^{-1}(y))^{\frac 1p} J_f(f^{-1}(y))^{\frac{1-p}p}\,dy.
\end{aligned}$$ 
It remains to show that $L(f^{-1}(y))^{\frac 1p} J_f(f^{-1}(y))^{\frac{1-p}p}\chi_{(0,1)^n\setminus f(Z\cup N)}$ is in $L^{p'}$. This follows since
$$\begin{aligned}
\int_{(0,1)^n} &\left(L(f^{-1}(y))^{\frac 1p} J_f(f^{-1}(y))^{\frac{1-p}p}\chi_{(0,1)^n\setminus f(Z\cup N)}\right)^{p'}\,dy\\
&=\int_{(0,1)^n \setminus f(Z\cup N)}  L(f^{-1}(y))^{p'-1} J_f(f^{-1}(y))^{-1}\,dy= \int_{(0,1)^n} L(x)^{p'-1}\,dx<\infty.
\end{aligned}$$

Surprisingly it is not enough to control by the absolute continuous part of the derivative $Df$. Indeed, it is possible to construct a homeomorphism $f$  such that for any constant $K\in \R$  we have
\eqn{ac}
$$|D^af|(x)\leq K J_f(x) \text{ for a.e. }x$$
 and Lusin $(N^{-1})$ condition fails. In \cite{He} we can find a Sobolev homeomorphism $g$ of $(0,1)^n$ such that $J_g=0$ a.e. and $|Dg|\in L^{n-1}$. The homeomorphism $g$ maps a set of full measure into  a set of measure zero and a set of measure zero into  a set of full measure. Let us show that $f=g^{-1}$  satisfies $|D^af|(x)=0$ and hence also  \eqref{ac}. 

It follows from Lemma 4.3 in \cite{CHM} and Theorem 3.8 \cite{Mi} that  $f=g^{-1}\in BV((0,1)^n,(0,1)^n)$ and 
$$|Df|(f^{-1}(G))\leq C\int_{G} |\operatorname{adj} Dg| \,d\Ln$$ holds for every open $G\subset (0,1)^n$ where $C$ depends only on $n$. Hence it also holds for each Borel set $A$ and we have 
$$|Df|(f^{-1}(A))\leq  C\int_{A} |\operatorname{adj} Dg| \,d\Ln.$$

 Denote by $N$ a Borel set $N\subset (0,1)^n$ such that $\Ln(N)=0$ and $g(N)=f^{-1}(N)=\Ln((0,1)^n)$. Then
$$|D^a f|((0,1)^n)= |D^a f|(f^{-1}(N))\leq |Df|(f^{-1}(N))\leq C \int_N |\operatorname{adj} Dg|^{n-1} \,d\Ln=0.$$
Thus $|D^af|=0$ a.e. and the inequality \eqref{ac} trivially  holds.

\section{Sufficient condition}

\prt{Theorem}
\begin{proclaim}\label{slozen3}
Let $\Omega_1, \Omega_2$ be open subsets of $\R^n$  and let $f\in BV_{\loc}(\Omega_1,\Omega_2)$ have no jump part.  
 Suppose that $f$ is not constant on any component of $\Omega$ and there is a constant $K>0$ such that
\eqn{klic55} 
$$|Df|(\tilde f^{-1}(A))\leq K \Ln (A)\text{ for all Borel sets } A\subset\Omega_2.$$
Then the operator $T_f(u)(x)=u(f(x))$ maps functions from  $BV(\Omega_2)$  into $BV(\Omega_1)$ and
\eqn{spojitost}
$$|D(u\circ f)|(\Omega_1)\leq K |Du|(\Omega_2).$$
\end{proclaim}

\begin{proof}
Suppose that $u\in BV(\Omega_2)$ . Let be $u_k$ an approximation of $u$ from Theorem \ref{bvchar} and $G\subset\subset \Omega_1$ be an open set.
We prove that $u_k\circ f$ is a good approximation of $u\circ f$ on $G$. 

  It follows from Theorem  \ref{l1} that $u\circ f \in L^1(\Omega)$  and due to $\eqref{l11}$ we obtain
$$\|u_k\circ f -u\circ f\|_{L^1(\Omega_1)}\leq C \|u_k-u\|_{L^1(\Omega_2)}.$$ Thus  $u_k\circ f \to u\circ f$ in $L^1(G)$. Let us note that Theorem \ref{lusin} is key for us. It  gives us validity of Lusin $(N^{-1})$ condition for the function $f$ and hence the composition  $u\circ f$ is a well-defined function.

By Theorem \ref{dercom} we have that $u_k\circ f$ belongs to $BV(G)$ and $D(u_k\circ f)(x)=\nabla u_k\bigr(\tilde{f}(x)\bigl)\cdot Df(x) $. As in \eqref{formula}   we can with the help of Theorem \ref{chavar} and the fact that $\tilde f(|Df|)(A)\leq K \Ln(A)$  estimate
$$
\begin{aligned}
|D(u_k\circ f)|(G)&\leq \int_{G}\left|\nabla u_k\bigr(\tilde f(x)\bigl)\right| \,d |Df|(x)
 \leq K \int_{\Omega_2}|\nabla u_k| \, d\Ln.
\end{aligned}$$ Lemma \ref{weak} gives us that $u\circ f$ has bounded variation on $G$.
Moreover,  using semi-continuity of the variation we obtain
$$
\begin{aligned}
|D(u\circ f)|(G)&\leq \inf \Bigl\{\sup_k \|D v_k\|_{L^1}:v_k\in L^1(G),v_k {\rightarrow} u\circ f \text{ in }L^1(G) \Bigr\}\\
&\leq \inf \Bigl\{\sup_k \|D (u_k\circ f)\|_{L^1}:u_k\in C^\infty,u_k {\rightarrow} u \text{ in } L^1(\Omega_2)\Bigr\}\\
&\leq K \inf \Bigl\{\sup_k \|D u_k\|_{L^1}:u_k\in C^\infty,u_k\rightarrow u \text{ in }L^1(\Omega_2)\Bigr\}\\
&= K |Du|(\Omega_2).
\end{aligned}$$
To prove \eqref{spojitost} find open sets $G_k\subset\subset \Omega$ such that $G_k\subset G_{k+1}$ and $\Omega_1=\bigcup_{k=1}^{\infty} G_k$ then 
$$|D(u\circ f)|(\Omega_1)=\lim_{k\to \infty} |D(u\circ f)|(G_k)\leq K |Du|(\Omega_2).$$

\end{proof}
In the case when $f$ is constant on some component $G$ of $\Omega$ the composition $u\circ f$ may fail to be well-defined. If we take a representative of $u$ such that $\tilde u(x)=0$ for all $x$ such that there is a component $G$ of $\Omega$ satisfying $f(G)=\{x\}$ then for this representative we have $\tilde u\circ f \in BV(\Omega_1)$ and  \eqref{spojitost} again holds.

By applying Theorem \ref{slozen3}  on characteristic functions of sets we easily obtain the following corollary.
\begin{corollary}\label{sufper}
Let $f$ satisfy the assumptions of Theorem \ref{slozen3}. Then for any set of finite perimeter $E\subset \Omega_2$  the preimage $f^{-1}(E)$ is a set of finite perimeter in $\Omega_1$ and $$P(f^{-1}(E),\Omega_1)\leq K P(E,\Omega_2).$$
\end{corollary}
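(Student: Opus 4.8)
The plan is to apply Theorem \ref{slozen3} to characteristic functions of sets. First I would fix a set of finite perimeter $E\subset\Omega_2$; by the definition of finite perimeter this is precisely the statement that $\chi_E\in BV(\Omega_2)$ with $|D\chi_E|(\Omega_2)=P(E,\Omega_2)<\infty$, so $\chi_E$ is an admissible input for the operator $T_f$. Applying Theorem \ref{slozen3} with $u=\chi_E$ then gives $T_f(\chi_E)=\chi_E\circ f\in BV(\Omega_1)$ together with $|D(\chi_E\circ f)|(\Omega_1)\le K\,|D\chi_E|(\Omega_2)$.

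Next I would identify $\chi_E\circ f=\chi_{f^{-1}(E)}$. For the everywhere-defined map $f$ one has $\chi_E(f(x))=1$ exactly when $x\in f^{-1}(E)$, so the pointwise identity is immediate; moreover, since $f$ satisfies the Lusin $(N^{-1})$ condition — which is guaranteed by Theorem \ref{lusin} in the case $p=1$, the function $K(y)$ there being the constant $K\in L^\infty(\R^n)$ — the $L^1$-class of the composition is insensitive to the representative chosen for $\chi_E$, so the identity holds as an equality in $L^1(\Omega_1)$. It then follows that $f^{-1}(E)$ has finite perimeter in $\Omega_1$ and
$$P(f^{-1}(E),\Omega_1)=|D\chi_{f^{-1}(E)}|(\Omega_1)=|D(\chi_E\circ f)|(\Omega_1)\le K\,|D\chi_E|(\Omega_2)=K\,P(E,\Omega_2).$$

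I do not expect any real obstacle: the corollary is a direct specialization of Theorem \ref{slozen3}, and all the analytic content — well-definedness of the composition, the $L^1$ bound ensuring $\chi_E\circ f\in L^1(\Omega_1)$, the $BV$ approximation, and the variation estimate — is already established there and in Theorems \ref{lusin} and \ref{l1}. The only point worth a second glance is the bookkeeping that the hypotheses of Theorem \ref{slozen3} ($f\in BV_{\loc}(\Omega_1,\Omega_2)$ with no jump part, not constant on any component, and \eqref{klic55}) are exactly the standing assumptions, so nothing new has to be checked.
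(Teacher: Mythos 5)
Your proof is correct and is exactly the paper's argument: the paper proves this corollary in one line by applying Theorem \ref{slozen3} to $u=\chi_E$ and using the definition $P(E,\Omega)=|D\chi_E|(\Omega)$. Your extra care about the identification $\chi_E\circ f=\chi_{f^{-1}(E)}$ up to null sets via the Lusin $(N^{-1})$ condition is a reasonable elaboration of the same route, not a different one.
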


\begin{rmk}
The condition \eqref{klic55} can be rewritten as
\eqn{bobabobek}
$$\int_{\tilde f^{-1} (A)} |D^a f|\,d\Ln + \int_{\tilde f^{-1} (A)} \,d|D^c f| \leq K\Ln(A) ,$$
which is equivalent to existence of constants $C_1,C_2\in \R$ such that 
\eqn{blb}
$$\int_{\tilde f^{-1} (A)} |D^a f|\,d\Ln \leq C_1 \Ln(A)$$
 and 
\eqn{bob}
$$ \int_{\tilde f^{-1} (A)} \,d|D^c f|\leq C_2 \Ln(A).$$
The second condition \eqref{bob} implies that $|D^c f|(\tilde f^{-1}(A))=0$ whenever $A\subset \Omega_2$ has measure zero.
\end{rmk}

\prt{Lemma}
\begin{proclaim}
Let $f$ belong to $BV(\Omega,\rn)$, have no jump part and satisfy $f(z)=\applim_{x\to z} f(x)$ whenever $z \in \sptt |D^c f|$ and the limit exists. Then $\eqref{klic55}$ holds if and only if
\eqn{bobabobek*}
$$|D f|(f^{-1}(A))\leq K\Ln(A) \text{ for all Borel sets } A\subset\Omega_2.$$
\end{proclaim}
\begin{proof}
Because $f^{-1}(A)$ and $\tilde f^{-1}(A)$ differ only by a set of $\Ln$ measure  zero we have 
\eqn{bob*} 
$$\int_{ f^{-1} (A)} |D^a f|\,d\Ln =\int_{ \tilde f^{-1} (A)} |D^a f|\,d\Ln.$$
For the second part we will use facts which can be found in Chapter 3 in \cite{AFP}. The set $S_f$ where the approximate limit does not exists is $\mathcal H^{n-1}$-negligible \cite[Theorem 3.76]{AFP}.
Thus  $\tilde f^{-1} (A)\cap \sptt |D^c f|$ and  $f^{-1} (A)\cap \sptt |D^c f|$ are equal up to a set of  $\mathcal H^{n-1}$-Hausdorff measure zero.  Because $Du$ does not see  sets of  $\mathcal H^{n-1}$ measure zero \cite[Lemma 3.76]{AFP}, we have 

\eqn{blb*}
$$ \int_{ f^{-1} (A)\cap \sptt |D^c f|} \,d|D^c f|= \int_{\tilde f^{-1} (A)\cap \sptt |D^c f|} \,d|D^c f|.$$
These two equalities together with \eqref{bobabobek} give us \eqref{bobabobek*}.
\end{proof}
Thus we may take in Theorem \ref{slozen3} the natural representative satisfying $f(x)=\lim_{r\to 0_+} \frac{1}{\Ln(B(x,r))}\int_{B(x,r)} f(z)\,dz$ and demand the condition $\eqref{bobabobek}$.

\prt{Lemma}
\begin{proclaim}
Assume that $f$ is a homeomorphism of bounded variation. Then the inequality \eqref{blb} is equivalent to 
\eqn{1wconf}
$$|D^a f(x)|\leq C_1 |J_f|(x) \text{ for a.e. }x \in \Omega_1.$$
\end{proclaim}
\begin{proof}
It easily follows from \eqref{1wconf} and Area formula (see \cite[Theorem 2]{H1}) that we have 
$$\int_{f^{-1} (A)} |D^a f|\, d\Ln\leq C_1 \int_{f^{-1} (A)} |J_f| \, d\Ln\leq C_1 \Ln(A).$$
To prove the second implication let us assume that $x$ is a Lebesgue point of $J_f$ and $D^a_f$. Find a Borel set $N$ of measure zero such that $f|_{\Omega_1\setminus N}$ satisfies Lusin $(N)$ condition. It follows by \eqref{blb} that
$$\int_{B(x,r)} |D^af| \, d\Ln= \int_{B(x,r)\setminus N} |D^af|\, d\Ln \leq C_1\left|f\left(B(x,r)\setminus N\right)\right|= C_1\int_{B(x,r)}|J_f| \, d\Ln.$$ 
By dividing the both sides by $\Ln(B(x,r))$ and sending $r\to 0$ we get \eqref{1wconf}.
\end{proof}

If we assume that $f$ is a Sobolev homeomorphism then $D^c f=0$. 
\prt{Corollary}
\begin{proclaim}
If $f$ is a homeomorphism in $W_{\loc} ^{1,1}(\Omega_1,\R^n)$, then \eqref{klic55} is equivalent to
\eqn{1conf}
$$|Df(x)|\leq K |J_f|(x) \text{ for a.e. }x \in \Omega_1.$$
\end{proclaim}

The simplest way to obtain the condition \eqref{klic55} is to check the integrability of the inverse.
\prt{Lemma}\begin{proclaim}\label{lip2}
Let $\Omega_1,\Omega_2 \subset \rn$ and let $f:\Omega_1\to \Omega_2$ be a mapping such that $f^{-1}$ is Lipschitz. Then \eqref{klic55} holds.
\end{proclaim}
\begin{proof}
It follows from Lemma 4.3 in \cite{CHM} and Theorem 3.8 in \cite{Mi} that  $f\in BV_{\loc}(\Omega_2,\Omega_1)$ and 
\eqn{odh11}
$$|Df|(f^{-1}(G))\leq C\int_{G} |\operatorname{adj} D(f^{-1})| \,d\Ln,$$ where $C$ depends only on $n$. Hence \eqref{odh11} holds for all Borel sets and we have 
$$|Df|(f^{-1}(A))\leq  C\int_{A} |\operatorname{adj} D(f^{-1})|\, d\Ln\leq C\|D(f^{-1})\|^{n-1}_{L^\infty} \Ln(A).$$
\end{proof}
\begin{example}
There is a homeomorphism $f$ such that \eqref{klic55} holds but $f\notin W^{1,1}_{\loc}$.
\end{example}
\begin{proof}
 Consider the usual Cantor ternary function $u$ on the interval $(0,1)$. And set $g(x)=u(x)+x$. This function is continuous, increasing and fails to be absolutely continuous. Moreover, $g$ does not belong to $W^{1,1}_{\loc}$. On the other hand, the inverse function $g^{-1}$ is Lipschitz and maps $(0,2)$ homeomorphically onto $(0,1)$. If we set 
$$f(x_1,\ldots,x_n)=(g(x_1),x_2,\ldots, x_n)$$ then obviously $f$ fails to belong to $W^{1,1}_{\loc}((0,1)^n,\rn)$, and $f^{-1}$ is a  Lipschitz function. Due to Lemma \ref{lip2} the function $f$ satisfies \eqref{klic55}.
\end{proof}

In the special case when $n=2$ we obtain the equivalence in Lemma \ref{lip2}. 
\prt{Lemma}\begin{proclaim}
Let $\Omega_1,\Omega_2 \subset \R^2$ and $f:\Omega_1\to \Omega_2$ be a homeomorphism. Then $f^{-1}\in W^{1,\infty}(\Omega_2,\Omega_1)$ if and only if $f\in BV_{\loc}(\Omega_1,\Omega_2)$ and \eqref{klic55} holds.
\end{proclaim}
\begin{proof}
It remains to prove the second implication. Let $f\in BV_{\loc}(\Omega_1,\Omega_2)$. It follows from \cite{DS} 
that $f^{-1}$ is in $BV_{\loc}(\Omega_2,\Omega_1)$ and
$$|D(f^{-1}_1)|(\Omega_2)=|D_y f|(f^{-1}(\Omega_2)) \text{ and }|D(f^{-1}_2)|(\Omega_2)=|D_x f|(f^{-1}(\Omega_2)).$$
It holds for all open set $\Omega_2$ thus we have for all Borel sets $A\subset \Omega_2$
$$|D(f^{-1})|(A)\leq 2|Df|(f^{-1}(A)).$$ 
By combining with \eqref{klic55} we have $|D(f^{-1})|(A)\leq 2 K\Ln(A)$ and thus the measure $D(f^{-1})$ is absolutely continuous with respect to $\Ln$ and its Radon-Nikodym derivative with respect to $\Ln$ is in $L^{\infty}$.
\end{proof}

\begin{rmk}
 This is not true in higher dimensions. Let $n\geq 3$. There exists a Sobolev homeomorphism $f$ on $[-1,1]^n$ onto $[-1,1]^n$ such that \eqref{1conf} (thus also \eqref{klic55}) is satisfied but $f^{-1}$ is not even a Sobolev function.
\end{rmk}
This function is constructed in Example 6.3. in \cite{HeKoMa}. They construct a Sobolev homeomorphism which maps one Cantor set on another one and it is piecewise affine on the complement of the Cantor set. For arbitrary $\epsilon\in(0,n-2)$  choose  an big enough parameter $l\in \N$ such that  $\epsilon l > 2(n - 1 - \epsilon)$. Then  for their function $f$ the following holds $Df\approx k^l$ and $J_f\approx k^{l-1}k^{l(n-2)}k^{-1}$ on $A_k$, $k\in \N$, where $A_k$ are pairwise disjoint sets of positive measure whose union has the full measure of  $[-1,1]^n$. Then  $$\frac{|Df|}{J_f}\approx \frac{k^l}{k^{l-1}k^{l(n-2)}k^{-1}}= \frac{1}{k^{l(n-2)-2}} \stackrel{k\to \infty}{\rightarrow} 0.$$
Hence $f$ satisfies \eqref{1conf} (even stronger condition $|Df|^{n-1-\epsilon}\leq CJ_f$ for some $C>0$). But they show that $f^{-1}$ is continuous but not ACL.

\section{A necessary condition}

\prt{Theorem}
\begin{proclaim}\label{neces}
Let $\Omega_1, \Omega_2$ be open subsets of $\R^n$  and let $f\in BV_{\loc}(\Omega_1,\Omega_2)$ have no jump part and suppose that  the operator $T_f$ maps functions from $C_c^\infty(\Omega_2)$ into $BV_{\loc}(\Omega_1)$ and there is a constant $K\in \R$ such that for all $u\in C_c^\infty(\Omega_2)$ we have
\eqn{necesklic}
$$|D(u\circ f)|(\Omega_1)\leq K |Du|(\Omega_2).$$
 Then   for all Borel set $A\subset \Omega_2$  we have
\eqn{klic56}
$$|Df|(\tilde{f}^{-1}(A))\leq 16n K \Ln (A).$$
\end{proclaim}
\begin{proof}
We may assume that $f=\tilde f$. (We change $u\circ f$ only on a set of measure zero.) Take $A\subset \Omega_2$ a Borel set.
  Suppose that  $|D f|(f^{-1}(A))\neq 0$, otherwise there is nothing to prove. Let $t>0$ and $0<L< |Df|(f^{-1}(A))$ be  arbitrary real numbers and fix $i\in \{1,\ldots,n\}$ such that $$|D(f_i)|(f^{-1}(A))\geq \frac{1}{n} |Df|(f^{-1}(A))>\frac{1}{n} L .$$ 
	Find an open set $G\subset \Omega_2$ such that $A\subset G$ and
$\Ln(G)\leq \Ln(A)+t$. 
 Then $$A=\bigcup_k A_k=\bigcup_k \{x\in A\cap B(0,k): \operatorname{dist}(x,\partial G)\geq 1/k\}.$$
Choose $k\in \N$ big enough such that $$|Df|(f^{-1}(A_k))>\frac{1}{n} L.$$
Find  a cut-off function $\eta\in \C_c^{\infty}(\Omega_2)$ 
satisfying  
$$\spt\eta\subset G,\ 0\le \eta\le 1\text{ and }\eta=1 \text{ on }A_k.$$ 
Take $m$ such that $m\geq 8$ and
$\|\nabla \eta\|_{\infty}\leq m$. 
Choose $E$ among the sets
$$
\aligned
E^{\sin}&=
\{x\in f^{-1}(A_k):\ \cos^2 ( m^2  f_i(x))\geq \tfrac12\},\\
E^{\cos}&=
\{x\in f^{-1}(A_k):\ \sin^2 ( m^2  f_i(x))\geq \tfrac12\}
\endaligned
$$
such that
$$
|D(f_i)|(E)\geq \tfrac 12 |D(f_i)|(f^{-1}(A_k))
$$
and set 
$$
u (y)=
\begin{cases}
\frac{1}{m^2} \eta(y)\sin (m^2 y_i)&\text{ if }E=E^{\sin}
\\
\frac {1}{m^2} \eta(y)\cos (m^2 y_i)&\text{ if }E=E^{\cos}.
\end{cases}
$$
First consider $E=E^{\sin}$. Obviously $u\in C^\infty_c(\Omega_2)$ and 

\eqn{lips}
$$|\nabla u(y)|= |1/m^2\nabla \eta(y) \sin (m^2 y_i)+\eta(y) \cos(m^2 y_i) e_i|\leq 2\text{ for all } y\in \Omega_2.$$
By the  product rule from Theorem \ref{dercom} it easily follows 
$$
\begin{aligned}
|D(u\circ f)|(E)&= \int_E  \,d|D(u \circ f)|=\int_E  |\nabla u|(\tilde f(x)) \,d|D f_i|\\
&\geq \int_E \left(|\eta(f) \cos(m^2 f_i) e_i|-|1/m^2\nabla \eta(f) \sin (m^2 f_i)|\right)\, d|D f_i|\\
&\geq \int_E (\tfrac{1}{\sqrt{2}}-\tfrac{1}{m})\, d|D f_i| \geq \frac 1{4}\ |D f_i|(E)\\
&\geq \frac 18|Df_i|(f^{-1}(A_k))\geq \frac{1}{8n} L.
\end{aligned}$$

Thus together with \eqref{necesklic}, $\spt u\subset G$ and $|\nabla u|
\leq 2$ we estimate
$$\begin{aligned}
 L\leq  8n |D(u\circ f)|(\Omega_1)&\leq  8n\, K |Du|(\Omega_2)\leq 8n K\, 2\cdot\Ln(G)\leq 16Kn (\Ln(A)+t).
\end{aligned}$$
By taking supremum over all $L\leq |D f|(f^{-1}(A))$ and letting $t\to 0$ we obtain \eqref{klic56}.

The case when $E=E^{\cos}$ is analogous.

\end{proof}
The following corollary gives us that we may only assume that $f^{-1}$ maps sets of finite perimeter onto sets of finite perimeter.

\begin{corollary}\label{necper}
Let $\Omega_1, \Omega_2$ be open subsets of $\R^n$  and let $f\in BV_{\loc}(\Omega_1,\Omega_2)$ have no jump part. If for all sets of finite perimeter $E\subset \Omega_2$ sets $f^{-1}(E)$ have finite perimeter and
$$P(f^{-1}(E),\Omega_1)\leq K P(E,\Omega_2).$$
Then \eqref{klic56} holds.
\end{corollary}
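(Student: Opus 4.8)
The plan is to derive Corollary \ref{necper} from Theorem \ref{neces}. Concretely, I would show that the perimeter hypothesis forces $T_f$ to map $C_c^\infty(\Omega_2)$ into $BV_{\loc}(\Omega_1)$ with the estimate $|D(u\circ f)|(\Omega_1)\le K|Du|(\Omega_2)$ for every $u\in C_c^\infty(\Omega_2)$ — which is precisely the input of Theorem \ref{neces} — and then \eqref{klic56} is immediate. (Throughout I read $f^{-1}$ via the precise representative $\tilde f$; since $\tilde f$ coincides $\Ln$-a.e. with any other representative, and since the total variation of $D\chi_E$ is insensitive to $\Ln$-null modifications of $E$, this choice is harmless, and all the perimeters below are unambiguous.)

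So fix $u\in C_c^\infty(\Omega_2)$. The composition $u\circ f:=u\circ\tilde f$ is defined $\Ln$-a.e.\ on $\Omega_1$, is Borel on $\Omega_1\setminus S_f$, and is bounded, so $u\circ f\in L^\infty(\Omega_1)\subset L^1_{\loc}(\Omega_1)$ and the coarea formula of Lemma \ref{perimetr} applies to it. I would next observe that its superlevel sets are, up to $\Ln$-null sets, preimages of honest finite-perimeter subsets of $\Omega_2$. For $t>0$, the set $E_t:=\{u>t\}$ is open and contained in the compact set $\spt u$, and by Lemma \ref{perimetr} applied to $u$ (using $|Du|(\Omega_2)<\infty$) it has finite perimeter in $\Omega_2$ for a.e.\ $t>0$; since $\{u\circ f>t\}=\tilde f^{-1}(E_t)$ differs from $f^{-1}(E_t)$ only within the $\Ln$-null set $S_f$, the hypothesis gives $P(\{u\circ f>t\},\Omega_1)=P(f^{-1}(E_t),\Omega_1)\le KP(E_t,\Omega_2)$ for a.e.\ $t>0$. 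For $t<0$ one argues with the bounded set $F_t:=\{u\le t\}\subset\spt u$, which again has finite perimeter for a.e.\ $t<0$; since $\chi_{\{u\circ f>t\}}=\chi_{\Omega_1}-\chi_{\tilde f^{-1}(F_t)}$, the distributional gradients agree in total variation, whence $P(\{u\circ f>t\},\Omega_1)=P(f^{-1}(F_t),\Omega_1)\le KP(F_t,\Omega_2)$ for a.e.\ $t<0$, and here one also uses $P(F_t,\Omega_2)=P(E_t,\Omega_2)$.

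Integrating over $t$ and applying Lemma \ref{perimetr} to both $u\circ f$ and $u$ then gives
\[
|D(u\circ f)|(\Omega_1)=\int_{-\infty}^\infty P(\{u\circ f>t\},\Omega_1)\,dt\le K\int_{-\infty}^\infty P(\{u>t\},\Omega_2)\,dt=K|Du|(\Omega_2),
\]
where the middle inequality combines the two bounds above with $P(F_t,\Omega_2)=P(E_t,\Omega_2)=P(\{u>t\},\Omega_2)$ for a.e.\ $t$. In particular $u\circ f$ has finite total variation on $\Omega_1$ and, being bounded, belongs to $BV_{\loc}(\Omega_1)$. Thus $f\in BV_{\loc}(\Omega_1,\Omega_2)$ has no jump part and $T_f$ satisfies the hypotheses of Theorem \ref{neces} with the same constant $K$, so $|Df|(\tilde f^{-1}(A))\le 16nK\Ln(A)$ for every Borel set $A\subset\Omega_2$, which is \eqref{klic56}.

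I expect the only genuine subtlety to be the bookkeeping forced by having two representatives in play: the hypothesis is phrased for genuine point-preimages of finite-perimeter sets, whereas $u\circ f$ is defined only through $\tilde f$, and — when $\Omega_1$ has infinite measure — the superlevel set $\{u\circ f>t\}$ for $t\le 0$ need not itself be of finite perimeter in the strict sense of this paper. Resolving this is exactly why one passes to the bounded complement $F_t=\{u\le t\}$ and invokes the invariance of the total variation of $D\chi_E$ under $\Ln$-null modifications of $E$ and under replacing $E$ by its complement. Everything else is a direct application of the coarea formula of Lemma \ref{perimetr} and of Theorem \ref{neces}.
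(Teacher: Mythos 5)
Your proof is correct and follows essentially the same route as the paper: apply the coarea formula of Lemma \ref{perimetr} to $u\circ f$, identify its superlevel sets with preimages of the superlevel sets of $u$, invoke the perimeter hypothesis, and feed the resulting estimate \eqref{necesklic} into Theorem \ref{neces}. Your extra care with the representative $\tilde f$ and with the levels $t<0$ (passing to the bounded complement $\{u\le t\}$ so that the hypothesis, stated for sets of finite perimeter, genuinely applies when $\Omega_2$ has infinite measure) is a detail the paper's proof silently skips over, but it does not change the argument.
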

\begin{proof} 
We show that the assumptions of Theorem \ref{neces} are satisfied. Let $u\in C^\infty_c(\Omega_2)$ then $u\circ f\in L^\infty(\Omega_1)$ and with the help of Lemma \ref{perimetr} we get

$$\begin{aligned}
|D(u\circ f)|(\Omega_1)&=\int_{-\infty}^\infty P(\{x:u(f(x))>t\},\Omega_1)\,dt\\
 &= \int_{-\infty}^\infty P(f^{-1}(\{y:u(y)>t\}),\Omega_1)\,dt\\
&\leq K \int_{-\infty}^\infty P(\{y:u(y)>t\},\Omega_2)\,dt\leq K |Du|(\Omega_2).
\end{aligned}$$
Thus  $u\circ f\in BV(\Omega_1)$ and \eqref{necesklic} holds.
\end{proof}

\begin{proof}[Proof of Theorem \ref{homos}]
The first part follows directly form Theorem \ref{slozen3}. Let us prove the second part.
First note that $f\in BV_{\loc}(\Omega_1,\Omega_2)$. To see it take an arbitrary ball $B\subset \subset \Omega$. Then $f(B)\subset\subset \Omega_2$ and hence we can find a smooth cutoff function $\Phi$ such that $\Phi=1$ on $f(B)$ and $\spt \Phi \subset \subset \Omega_2$. It follows that $u=e_i \Phi$, $i\in (1,\ldots,n)$ are suitable test function and $u\circ f= f_i$ on $B$. Thus each component $f_i$ belongs to  $BV_{\loc}(\Omega_1)$.

Suppose that \eqref{klic1} does not hold. Then there are Borel sets $G_k,$ $k\in \N$ such that 
\eqn{klic99}
$$|Df|(\tilde{f}^{-1}(G_k))>  k \Ln (G_k).$$
 
Because the Lebesgue measure is regular we may assume $G_k$ are open. Moreover, we may assume that  $|Df|(\tilde{f}^{-1}(G_k))<\infty$, otherwise we would replace $G_k$ by $G_k\cap \{x\in B(0,R), \operatorname{dist}(x,\partial \Omega_1)<1/R\}$ for some $R$ big enough. We claim that is possible to find pairwise disjoint open sets $G_k$ satisfying \eqref{klic99}. 

Let $l\in\N$, $G_k$ satisfies $\eqref{klic99}$,  $G_1,\ldots G_{l-1}$ are pairwise disjoint and 
$$\bigcup_{i=1}^{l-1}G_i\cap \bigcup_{i=l}^{\infty}G_i=\emptyset.$$ 
We describe how to construct $\tilde G_k$ which has properties of $G_k$ but additionally $\tilde G_l\cap \bigcup_{i=l+1}^{\infty}\tilde G_i=\emptyset$.
   Fix some $m\geq {l}/{\tau}$, where  $\tau$ is from Lemma \ref{pravd}.
  Due to  the  non-atomicity of the measure $|Df|$  we may use Lemma \ref{pravd} on the measure 
	$$\mu(A)=\frac{|Df|({f}^{-1}(A\cap G_{m}))}{|Df|({f}^{-1}(G_{m}))}$$
to find open sets $P_1=B(y,2R)$, $P_2=\rn \setminus \overline{B(y,11/4 R)}$, $R_1=B(y,10/4 R)$, $R_2=\rn \setminus \overline{B(y,9/4 R)}$ such that $$\mu(P_1),\ \mu(P_2)\geq \tau, \ P_1\cap R_2=\emptyset= P_2\cap R_1,\ R_1\cup R_2=\rn.$$
Then we obtain for all $i\in \N$ that
$$\begin{aligned}
|Df|(f^{-1}(G_i\cap R_1))&+|Df|(f^{-1}(G_i\cap R_2))\geq |Df|(f^{-1}(G_i))\\
&> i \Ln (G_i)\geq i/2\, \Ln (G_i\cap R_1)+i/2\ \Ln (G_i\cap R_2).
\end{aligned}$$
 Hence at least one of the sets 
$$\begin{aligned}
C&=\{i\in \N,i>m,\ |Df|(f^{-1}(G_i\cap R_1))> i/2  \, \Ln (G_i\cap R_1)\},\\
D&=\{i\in \N, i>m,\ |Df|(f^{-1}(G_i\cap R_2))> i/2  \, \Ln (G_i\cap R_2)\}
\end{aligned}$$
has to be infinite. First consider the case when $C$ is infinite.  Let $c_i, i\in \N$  be an increasing sequence containing all elements of  $C$.  Set $\tilde G_i=G_i$ for $i<l$,
$$\tilde G_l= G_{m}\cap P_2 \text{ and } G_i= G_{c_{2i}}\cap R_1 \text{ for } i>l.$$ 
Then obviously $\tilde G_i$, $i\in \{1,\ldots l\}$ are pairwise disjoint and $$\bigcup_{i=1}^{l}\tilde G_i\cap \bigcup_{i=l+1}^{\infty}\tilde G_i=\emptyset.$$ It remains to verify \eqref{klic99}. It follows that
$$|Df|({f}^{-1}(\tilde G_l))=  \mu(P_2) \cdot |Df|({f}^{-1}(G_m)) >  \tau m \Ln(G_{m})\geq l \Ln(\tilde G_{l})$$
and for all $i>l$ we have
$$|Df|({f}^{-1}(\tilde G_i))= |Df|\left({f}^{-1}(G_{c_{2i}}\cap R_1)\right)> 1/2 \cdot c_{2i} \Ln( G_{c_{2i}}\cap R_1)\geq i \Ln(\tilde G_{i}).$$ The case when $D$ is infinite is analogous. Thus we may iterate this construction to obtain $G_1,G_2,\dots$ pairwise disjoint.

Because $f$ has no jump part and \eqref{klic56} does not hold on $G_k$ with $K= \frac{1}{16n} k$ it follows from Theorem \ref{neces} that there are a $u_k\in C_c^\infty(G_k)$ such that
$$|D(u_k\circ f)|(\Omega_1)> \frac{1}{16n} k |Du_k|(G_k).$$
 Replace $u_k$ by its constant multiple to obtain $|Du_k|(G_k)=1$. Due to the fact that $|D|v||=|Dv|$ for any function $v$ of bounded variation we may assume that $\|u_k\|_{L^\infty} \leq 1$ (Otherwise we can  iterate replacing $u_k$ by function $\tilde{u_k}=||u_k|-1/2\|u_k\|_{L^\infty}|$, which has the same total variation of the distributional derivative and its maximum is half of the maximum of $u_k$.)
Set 
$$u=\sum_{k=1}^\infty \frac{1}{k^2} u_k.$$
Obviously $u\in C_0 \cap BV(\Omega_2)$ and
$$|Du\circ f|(\Omega_1)=\sum_{k=1}^\infty \frac{1}{k^2}|Du_k\circ f|(f^{-1}(G_k))=\sum_{k=1}^\infty Ck \frac{1}{k^2}=\infty.$$
\end{proof}
\begin{proof}[Proof of Theorem \ref{neces3}]
To prove first part we can follow the proof of Corollary \ref{necper} and instead of using  Theorem \ref{neces} we use Theorem \ref{homos}.

The second implication follows directly from Corollary \ref{sufper}.
\end{proof}

\end{document}